\documentclass[12pt,a4paper]{amsart}
\usepackage{amssymb,amsmath,amsthm}
\usepackage{graphicx}
\usepackage{xcolor}

\usepackage{epsfig}

\long\def\onefigure#1#2{
\begin{figure*}[tbp]
\begin{center}
#1
\end{center}
\caption{#2}
\end{figure*}
}

\newcommand{\lipefig}[2]  
{\onefigure{\mbox{\psfig{file=#1.eps}}}{\label{f:#1} #2} }

\newtheorem{theorem}{Theorem}[section]
\newtheorem{lemma}{Lemma}[section]

\newtheorem{claim}{Claim}[section]
\newtheorem{fact}{Fact}[section]
\newtheorem{defn}{Definition}[section]

\newcommand{\al}{\alpha}

\newcommand{\la}{\lambda}

\newcommand{\R}{\mathbb{R}}

\newcommand{\T}{\mathcal{T}}
\newcommand{\F}{\mathcal{F}}

\newcommand{\Ave}{\textrm{Ave}}
\newcommand{\diam}{\textrm{diam\;}}
\newcommand{\lin}{\textrm{lin}}

\newcommand{\remove}[1]{}

\numberwithin{equation}{section}

\begin{document}

\title{Same average in every direction}
\author{Imre B\'ar\'any and G\'abor Domokos}
\keywords{Convex polytopes, zonotopes, number of vertices, fragmentation}
\subjclass[2020]{Primary 52A15, secondary 52A20}

\numberwithin{equation}{section}

\maketitle

\begin{abstract} Given a polytope $P\subset \R^3$ and a non-zero vector $z \in \R^3$, the plane $\{x\in \R^3:zx=t\}$ intersects $P$ in a convex polygon $P(z,t)$ for $t \in [t^-,t^+]$ where $t^-=\min \{zx: x \in P\}$ and $t^+=\max \{zx: x\in P\}$, $zx$ is the scalar product of $z,x \in \R^3$. Let $A(P,z)$ denote the average number of vertices of $P(z,t)$ on the interval $[t^-,t^+]$. For what polytopes is $A(P,z)$ a constant independent of $z$?
\end{abstract}

\section{Introduction}\label{sec:introd}

Assume $P$ is a convex polytope  and $z$ is a non-zero vector in 3-space. Set $t^-=t^-(P,z)=\min \{zx: x \in P\}$ and $t^+=t^+(P,z)=\max \{xz: x\in P\}$, where  $zx$ is the scalar product of $z,x \in \R^3$. For $t \in [t^-,t^+]$ the intersection of $P$ with the plane $\{x\in \R^3:zx=t\}$ is a convex polygon $P(z,t)$. Define $A(P,z)$ as the average of the number of vertices of $P(z,t)$ on the interval  $[t^-,t^+]$, that is
\begin{equation}\label{eq:ave}
A(P,z)=\frac 1{t^+-t^-}\int_{t^-}^{t^+} \mbox{number of vertices of }P(z,t)dt.
\end{equation}

Note that in this definition we may assume that $z$ is a unit vector because $A(P,z)=A(P,sz)$ for every non-zero real number $s$. In this case $t^+-t^-=w(P,z)$, the width of $P$ in direction $z$, otherwise $t^+-t^-=\|z\|w(P,z)$ where $\|z\|$ is the Euclidean norm of $z\ne 0.$

\smallskip
The quantity $A(P,z)$, the average number of vertices (or of edges) of the convex  polygons $P(z,t)$, has come up in geology, and a strange phenomenon has been observed. Namely that $A(P,z)=4$ for every $z$ when $P$ is a cube in $\R^3$. Subsequently the following question emerged. For what polytopes $A(P,z)$ is a constant independent of $z$? We answer this question for centrally symmetric polytopes in Theorem~\ref{th:main} below.
We return to the connection of $A(P,z)$ to geology, in particular to rock formations in the last section of this paper.

\medskip
A zonotope $P$ is the Minkowski sum of intervals $[0,a_i]$, where $a_1,\ldots,a_n$ are non-collinear vectors (called the generators of $P$) in $\R^d$, $n\ge d$, see \cite{schn}. In particular the cube in $\R^3$ is a zonotope with 3 generators. More generally the following is true.

\begin{fact}\label{fact:zono} If $P \in \R^3$ is a zonotope with $n$ generators no three of which are collinear, then $A(P,z)=2(n-1)$ for every vector $z\ne 0.$
\end{fact}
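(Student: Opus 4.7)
The plan is to compute both the numerator (the integrated vertex count) and the denominator $t^+-t^-$ of $A(P,z)$ as explicit sums over the generators $a_1,\dots,a_n$, and to observe that they differ only by the universal factor $2(n-1)$.

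\smallskip

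For a generic slicing height $t$ the vertices of $P(z,t)$ correspond bijectively to edges of $P$ that cross $\{x:zx=t\}$. Swapping order of integration and summation (Fubini) therefore yields
\[
\int_{t^-}^{t^+} |\vv P(z,t)|\,dt \;=\; \sum_{e\text{ edge of }P} |z\cdot(u_e-v_e)|,
\]
where $u_e,v_e$ are the endpoints of $e$. Every edge of a zonotope is a translate of one of the generating segments $[0,a_i]$, so if $N_i$ denotes the number of edges of $P$ parallel to $a_i$ the identity becomes
\[
\int_{t^-}^{t^+} |\vv P(z,t)|\,dt \;=\; \sum_{i=1}^n N_i\,|z\cdot a_i|.
\]

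\smallskip

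The key combinatorial step is to show $N_i=2(n-1)$ for every $i$, \emph{independently} of $i$. The edges of $P$ parallel to $a_i$ are in bijection with the edges of the planar zonotope $\pi_i(P)$ obtained by projecting $P$ along $\spann(a_i)$; the generators of $\pi_i(P)$ are the $n-1$ projected vectors $\pi_i(a_j)$, $j\ne i$. Under the paper's general-position hypothesis (read as: no three of the $a_j$ lie in a common $2$-plane through the origin) no two of these projections are parallel, so $\pi_i(P)$ is a centrally symmetric $2(n-1)$-gon, giving $N_i=2(n-1)$.

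\smallskip

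To finish, observe that for a Minkowski sum the width in direction $z$ is the sum of the widths of the summands, so $t^+-t^-=\sum_{i=1}^n |z\cdot a_i|$. Dividing the two expressions, the common weighted sum $\sum_i |z\cdot a_i|$ cancels and $A(P,z)=2(n-1)$ as claimed. The main obstacle is the combinatorial step: carefully verifying the bijection between edges of $P$ parallel to $a_i$ and edges of $\pi_i(P)$, and pinning down the precise general-position assumption, since three coplanar generators $a_i,a_j,a_k$ would cause $\pi_i(a_j)$ and $\pi_i(a_k)$ to become parallel, merging two edge-classes and destroying the uniform count of $N_i$.
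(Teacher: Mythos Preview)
Your proof is correct and follows essentially the same route as the paper: both compute the integrated vertex count as $\sum_i |E_i|\,|z\cdot a_i|$, establish $|E_i|=2(n-1)$ by projecting along $a_i$ onto the orthogonal plane to obtain a centrally symmetric $2(n-1)$-gon, and then cancel against the denominator $\sum_i |z\cdot a_i|$. The only cosmetic difference is that you obtain $t^+-t^-=\sum_i|z\cdot a_i|$ from width-additivity of Minkowski sums, whereas the paper reads it off from the monotone edge-path of Section~\ref{sec:ave}; your reading of ``no three collinear'' as ``no three coplanar through the origin'' is exactly the hypothesis the paper's projection argument uses.
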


The proof is given in Section~\ref{sec:lem11}. For the statement of our main theorem a definition is needed.
Assume $P\subset \R^3$ is a zonotope with generators $v_1,\ldots,v_n$ and with $A(P,z)$ constant, say $\la$. Define a hypergraph $\F$ with vertex set $V=\{v_1,\ldots,v_n\}$ and $W\subset V$ is an edge in $\F$ if $\dim \lin W=2$ and $W$ is maximal with this property, that is, $\dim \lin (W \cup \{v\})=3$
for every $v \in V \setminus W.$ The degree, $\deg v$, of $v \in V$ is the number of edges of $\F$ containing it.

\begin{theorem}\label{th:main} Assume $P\subset \R^3$ is a centrally symmetric polytope with generator set $V$. Then $A(P,z)=\la$ for every $z \ne 0$ if and only if $P$ is a zonotope with $2\deg v=\la$ for every $v\in V$ in the hypergraph $\F.$
\end{theorem}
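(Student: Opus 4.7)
The plan rests on a single edge-by-edge identity for the numerator of $A(P,z)$: each edge $e$ of $P$ with unit direction $u_e$ and length $|e|$ contributes exactly one vertex to the cross-section $P(z,t)$ over a $t$-interval of length $|e|\cdot|z\cdot u_e|$, hence
\[
(t^+-t^-)A(P,z)=\sum_{e\in E(P)}|e|\cdot|z\cdot u_e|=\sum_{u}L_u|z\cdot u|,
\]
where $L_u$ denotes the total length of edges of $P$ parallel to direction $u$ and the last sum runs over unoriented edge directions.

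For the \emph{if} direction, take $P$ to be a zonotope with generators $v_1,\ldots,v_n$ and $2\deg v_i=\la$ for each $i$. I would first show $P$ has exactly $2\deg v_i$ edges parallel to $v_i$, each of length $|v_i|$. Project $P$ along $v_i$ onto $v_i^\perp$: the image is a zonogon with generators $\{\pr_i v_j\}_{j\ne i}$. Two projections $\pr_i v_j$ and $\pr_i v_k$ are parallel precisely when $v_i,v_j,v_k$ are coplanar, i.e. when $v_j,v_k$ lie in a common edge of $\F$ containing $v_i$. After collapsing parallels, the zonogon has $\deg v_i$ distinct generator directions and hence $2\deg v_i$ vertices, each lifting (via the segment of $P$ it indexes under the projection) to a unique edge of $P$ parallel to $v_i$. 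Combined with the zonotope width formula $t^+-t^-=\sum_i|z\cdot v_i|$, substitution gives
\[
A(P,z)=\frac{\sum_i 2\deg v_i \cdot |z\cdot v_i|}{\sum_i|z\cdot v_i|}=\la.
\]

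For the \emph{only if} direction, assume $P$ is centrally symmetric (translated so its center is the origin) with $A(P,z)\equiv\la$. The identity becomes $\la w(P,z)=\sum_u L_u|z\cdot u|$, and since $P$ is centered at the origin, $w(P,z)=2h(P,z)$ for the support function $h$; hence $h(P,z)=h(Z,z)$ for the zonotope $Z=\sum_u \frac{L_u}{2\la}[-u,u]$. Support functions determine convex bodies uniquely, so $P=Z$ is a zonotope. Applying the \emph{if}-direction computation yields $\sum_i(2\deg v_i-\la)|z\cdot v_i|=0$ for every $z$. The functions $z\mapsto|z\cdot v_i|$ on $S^2$ are linearly independent—the gradient of $|z\cdot v|$ has a jump singularity precisely on the great circle $v^\perp$, and distinct generators yield distinct such circles—so $2\deg v_i=\la$ for every $i$.

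The step I expect to be most delicate is the combinatorial identity $e_i=2\deg v_i$: careful bookkeeping is needed when more than two generators lie in a single plane through $v_i$ (so that the corresponding edge of $\F$ has size $\ge 3$), both in counting the distinct generator directions of the projected zonogon and in arguing that each of its $2\deg v_i$ vertices lifts to exactly one edge of $P$. The support-function step in the converse is then routine, and the linear independence of the $|z\cdot v_i|$ is a standard fact about piecewise-linear functions on $S^2$.
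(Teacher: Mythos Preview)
Your argument is correct, and it shares the two central ingredients with the paper: the edge-counting identity $(t^+-t^-)A(P,z)=\sum_e |e|\,|z\cdot u_e|$ (the paper's formula~(\ref{eq:Ave})) and the projection argument showing a zonotope has exactly $2\deg v_i$ edges parallel to $v_i$. Where you diverge is in the ``only if'' direction. The paper does \emph{not} invoke support functions; instead it proves the more general Lemma~\ref{l:P-P} by a combinatorial ``flip one class'' trick: choosing $z$ and then $z^*$ so that the sign of $z\cdot e$ reverses on a single class $E_1$ yields the opposite-edge constraint $\la(e_1+e_1')=2\overline{e_1}$, from which $P-P$ is a zonotope; central symmetry then forces $P$ itself to be one, and the same opposite-edge identity combined with $|E_i|=2\deg v_i$ gives $\la=2\deg v_i$. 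Your route is more analytic and more direct in the symmetric case: you read off $h(P,z)=h(Z,z)$ immediately and finish via linear independence of the functions $|z\cdot v_i|$. The paper's detour through Lemma~\ref{l:P-P} buys the half-zonotope conclusion for arbitrary (non-symmetric) $P$, which it then exploits in Section~\ref{sec:exampl}; your support-function step uses central symmetry from the outset and would not by itself yield that extra mileage.
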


This theorem shows that $\la$ is an even integer when $P$ is centrally symmetric. We are going to give examples of non-symmetric polytopes with $A(P,z)=\la$, a constant for every $z$. In these examples $\la$ is always an even integer. However this is not the case in general because there are polytopes
$P\subset \R^3$ with $A(P,z)=\la$, a constant that is not an integer. This is a result of Attila P\'or~\cite{Por}. He proves a stronger theorem, namely, that there is in open (and non-empty) interval $I \subset \R$ such that for every $\la \in I$ there is a polytope $P\subset \R^3$ with $A(P,z)=\la$ for every $z$.

\bigskip

\section{Higher dimensions}\label{sec:high}

The definition of $A(P,z)$ extends without any change to polytopes $P$ in $\R^d$. The case $d=2$ is trivial as $P(z,t)$ is a segment and $A(P,z)=2$ for every convex polygon $P$. The higher dimension version of Theorem~\ref{th:main} says the following.

\begin{theorem}\label{th:maind} Assume $P\subset \R^d$ is a centrally symmetric polytope. Then $A(P,z)=\la$ for every $z \ne 0$ if and only if $P$ is a zonotope with generator set $V$ such that the number of edges of $P$ parallel with $v$ equals $\la$ for every $v \in V.$
\end{theorem}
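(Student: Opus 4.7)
My plan is to base the whole proof on a slicing-integral identity that holds in any dimension. For a polytope $P\subset\R^d$ and a unit vector $z$, a vertex of the $(d-1)$-dimensional slice $P(z,t)$ is (for generic $t$) the intersection of the hyperplane $\{x:zx=t\}$ with an edge $e$ of $P$, and that edge contributes one vertex precisely when $t$ lies in an interval of length $|z\cdot v_e|$, where $v_e$ denotes the edge vector. Integrating yields
\begin{equation*}
\int_{t^-}^{t^+}(\mbox{number of vertices of }P(z,t))\,dt\;=\;\sum_{e\in E(P)}|z\cdot v_e|,
\end{equation*}
so $A(P,z)\,w(P,z)=\sum_{e}|z\cdot v_e|$. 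This identity should be the common source of both directions.

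For the ``if'' direction I would assume $P=c+\sum_i[0,v_i]$ is a zonotope with pairwise non-parallel generators $v_1,\dots,v_n\in V$ and $\la$ edges parallel to each $v_i$. A zonotope has width $w(P,z)=\sum_i|z\cdot v_i|$, and since every edge of $P$ is a translate of some generator, grouping edges by direction gives $\sum_{e}|z\cdot v_e|=\sum_i\la|z\cdot v_i|$; the ratio collapses to $\la$ uniformly in $z$. For the ``only if'' direction, assume $P$ is centrally symmetric (WLOG about the origin) and $A(P,z)\equiv\la$. Central symmetry gives $w(P,z)=2h_P(z)$, so
\begin{equation*}
h_P(z)\;=\;\frac{1}{2\la}\sum_{e\in E(P)}|z\cdot v_e|.
\end{equation*}
The right-hand side is the support function of the Minkowski sum $\frac{1}{2\la}\sum_{e}[-v_e/2,v_e/2]$, which is a zonotope (using $h_{[-v/2,v/2]}(z)=|z\cdot v|/2$); equality of support functions therefore forces $P$ itself to be a zonotope. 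Writing $P=c+\sum_i[0,v_i]$ with pairwise non-parallel generators and letting $N_i$ denote the number of edges of $P$ parallel to $v_i$, the constancy of $A$ rearranges into
\begin{equation*}
\sum_{i=1}^n (N_i-\la)\,|z\cdot v_i|\;\equiv\;0,
\end{equation*}
and this will force $N_i=\la$ for every $i$ by linear independence of the functions $z\mapsto|z\cdot v_i|$ over pairwise non-parallel $v_i$.

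The one genuinely non-trivial step is this final linear-independence claim, which I expect to be the main obstacle. The cleanest route is a singular-set argument: $|z\cdot v_i|$ fails to be differentiable precisely on the hyperplane $v_i^\perp$, and pairwise non-parallel $v_i$ produce pairwise distinct such hyperplanes, so a nontrivial combination of the $|z\cdot v_i|$'s must be non-smooth on some $v_j^\perp$ and hence cannot vanish identically. Alternatively one can invoke the uniqueness of the generator set of a zonotope. Everything else is routine bookkeeping; specializing $d=3$, the identification ``number of edges of $P$ parallel to $v$'' $=2\deg v$ in the hypergraph $\F$ recovers Theorem~\ref{th:main}, while for general $d$ the argument is dimension-agnostic.
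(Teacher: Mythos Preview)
Your proof is correct (up to an innocuous factor: the zonotope with support function $\tfrac{1}{2\la}\sum_e|z\cdot v_e|$ is $\tfrac{1}{\la}\sum_e[-v_e/2,v_e/2]$, not $\tfrac{1}{2\la}\sum_e[-v_e/2,v_e/2]$), and the singular-set argument for the independence of the functions $z\mapsto|z\cdot v_i|$ is clean and valid.

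Your route differs from the paper's. Both begin from the same slicing identity $A(P,z)\,w(P,z)=\sum_{e\in E}|z\cdot v_e|$, but the paper does \emph{not} pass through support functions. Instead it rewrites the denominator $w(P,z)$ as $\sum_i|z\cdot e_i|$ along an edge-path from the $z$-minimal to the $z$-maximal vertex, and then compares two nearby generic directions $z,z^*$ that differ only in the sign of $z\cdot e$ for $e\in E_1$; subtracting the two resulting identities yields $\la(e_1+e_1')=2\overline{e_1}$ for every pair of opposite edges in $E_1$. This shows that $P-P$ is a zonotope (Lemma~\ref{l:P-P}), and then central symmetry gives $2P=P-P$, so $P$ is a zonotope; the equality $|E_i|=\la$ follows from $\la(e_i+e_i')=2\overline{e_i}$ once one knows each edge in $E_i$ equals the generator $v_i$. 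Your approach is more direct and more analytic for the symmetric case---central symmetry turns $w(P,z)$ into $2h_P(z)$ in one stroke and the zonotope conclusion is immediate from the form of $h_P$---while the paper's opposite-edge argument is more combinatorial and has the advantage of yielding the half-zonotope Lemma~\ref{l:P-P} for \emph{arbitrary} (not necessarily symmetric) $P$, which the paper exploits in Section~\ref{sec:exampl} for the non-symmetric examples.
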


For the proof an auxiliary lemma is needed.

\begin{lemma}\label{l:P-P} If $P\subset \R^d$ is a $d$-dimensional polytope and $A(P,z)$ is a constant independent of $z$, then $P-P$ is a zonotope.
\end{lemma}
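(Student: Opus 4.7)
The plan is to convert the defining integral for $A(P,z)$ into a sum over the edges of $P$, and then to identify the result as the support function of an explicit zonotope.

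The first step is the identity
\[
\int_{t^-}^{t^+} (\mbox{number of vertices of } P(z,t))\, dt \;=\; \sum_{e} |z \cdot u_e|,
\]
where $e$ ranges over the edges of $P$ and $u_e$ is the vector joining the two endpoints of $e$. For $t$ outside a finite set the slicing hyperplane $\{x : zx = t\}$ is transverse to the $1$-skeleton of $P$, and on general grounds the vertices of $P(z,t)$ are then in bijection with those edges of $P$ that cross the hyperplane. A single edge $e$ contributes to the slice precisely for $t$ in an interval of length $|z\cdot u_e|$; summing over $e$ yields the displayed identity, the non-transverse values of $t$ being a finite set and therefore negligible in the integral.

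Next I use the elementary identity $t^+ - t^- = h_P(z) + h_P(-z) = h_{P-P}(z)$, valid for every nonzero $z$ because the support function is positively homogeneous and satisfies $h_{-P}(z)=h_P(-z)$. The hypothesis $A(P,z)=\lambda$ for all $z\ne 0$ thus rewrites as
\[
\lambda\, h_{P-P}(z) \;=\; \sum_e |z\cdot u_e| \;=\; 2\sum_e h_{[-u_e/2,\,u_e/2]}(z) \;=\; 2\,h_Z(z),
\]
where $Z := \sum_e \bigl[-\tfrac12 u_e,\,\tfrac12 u_e\bigr]$ is the zonotope generated by (half) the edge vectors of $P$. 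Since $\lambda>0$, the support functions of $P-P$ and of $\frac{2}{\lambda}Z$ coincide on all of $\R^d$, and a polytope is determined by its support function, so $P-P = \frac{2}{\lambda}Z$ is a zonotope.

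The main step that requires care is the opening combinatorial identity: one must verify that the non-generic slices---where the hyperplane passes through a vertex of $P$, contains an edge of $P$, or coincides with a facet---contribute a set of $t$'s of Lebesgue measure zero and hence can be safely ignored. Once this is in place, the remainder is routine support-function bookkeeping and uses nothing about $d$ beyond the basic fact that vertices of a generic slice of a $d$-polytope come from transversal intersections with edges.
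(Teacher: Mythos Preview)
Your proof is correct and takes a different, more analytic route than the paper. You and the paper share the opening identity $\int_{t^-}^{t^+}(\text{vertices of }P(z,t))\,dt=\sum_e|z\cdot u_e|$ (this is the paper's equation~(\ref{eq:Ave})), but from there the arguments diverge. The paper partitions the edge set into parallelism classes $E_1,\ldots,E_n$, fixes a generic $z$, and then perturbs $z$ to a nearby $z^*$ that flips the sign on a single class $E_i$; comparing the two instances of the constant-average identity yields the refined relation $\lambda(e_i+e_i')=2\overline{e_i}$ for every pair of opposite edges $e_i,e_i'\in E_i$, where $\overline{e_i}=\sum_{e\in E_i}e$. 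This says that every edge of $P-P$ in a given direction class is the same vector $\frac{2}{\lambda}\overline{e_i}$, which is a direct combinatorial characterisation of zonotopes. You instead bypass all of this by recognising $\sum_e|z\cdot u_e|$ as the support function of an explicit zonotope and invoking the fact that a convex body is determined by its support function. Your argument is shorter and cleaner for the lemma as stated; the cost is that the paper's opposite-edge identity $\lambda(e_i+e_i')=2\overline{e_i}$ is quoted verbatim later in the proof of Claim~\ref{cl:deg} (and hence Theorem~\ref{th:main}), so if you adopt your approach you will need to supply that relation separately when you come to the main theorem.
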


We are going to call a convex body $K\subset \R^d$ a {\sl half-zonotope} if $K-K$ is a zonotope. It is clear that a half-zonotope is always a polytope. Moreover
a zonotope $P$ is always a half-zonotope because, assuming that the centre of $P$ is the origin, $P-P=P+P=2P$ is clearly a zonotope. In view of
Lemma~\ref{l:P-P} one would like to see what polytopes are half-zonotopes. One case is easy:

\begin{fact}\label{f:o-sym} A centrally symmetric half-zonotope is a zonotope.
\end{fact}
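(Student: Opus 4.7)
The plan is essentially a one-line observation: central symmetry reduces the difference body $P-P$ to a scalar multiple of $P$ itself. First I would translate $P$ so that its centre of symmetry is at the origin; this costs nothing since zonotope-ness is translation invariant, and the hypothesis that $P-P$ is a zonotope is unaffected by translating $P$. With the centre at the origin we have $-P = P$, and therefore
\[
P - P \;=\; P + (-P) \;=\; P + P \;=\; 2P.
\]

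By assumption $P-P$ is a zonotope, so $2P$ is a zonotope; writing $2P$ as a Minkowski sum $\sum [0,a_i]$ of segments immediately gives $P = \sum [0,a_i/2]$, which is a zonotope with generators $a_i/2$. There is no real obstacle here, since the class of zonotopes is closed under positive scaling and the identity $P-P=2P$ for centrally symmetric $P$ is elementary. The only tiny point to flag is that the definition of zonotope in the paper uses generators $[0,a_i]$ based at the origin rather than symmetric intervals $[-a_i/2,a_i/2]$, but the two conventions differ only by a translation, so the scaling argument works without modification.
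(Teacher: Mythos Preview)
Your argument is correct and is essentially the same as the paper's: after translating the centre to the origin, $P-P=P+P=2P$, and a positive dilate of a zonotope is a zonotope. The extra remarks on scaling and on the $[0,a_i]$ versus $[-a_i/2,a_i/2]$ convention are fine but not needed.
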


The {\bf proof} is very simple: if $P$ is a $0$-symmetric half-zonotope, then $P=-P$ and $P-P=P+P=2P$ is a zonotope, so $P$ is always a zonotope.\qed

\medskip
In Section~\ref{sec:exampl} we give an example of a half-zonotope which is not a zonotope, and another non-zonotope $P\subset \R^3$ with $A(P,z)=6$ for every $z$, and more generally, for every $m\ge 2$ another non-zonotope $P$ with $A(P,z)=2m$ for every $z$.

\medskip
\section{Another expression of $A(P,z)$ and proof of Fact~\ref{fact:zono}}\label{sec:ave}

An edge of $P$ is a segment $[u,v]$ where $u,v$ are vertices of $P$ and there is a supporting hyperplane $h$ such that $P\cap h=[u,v]$. The edge is also a vector $u-v$ or $v-u$ and we can always choose (liberally)
which one. If there is another edge $e'$ parallel with $e$ and with a supporting hyperplane $h'$ also parallel with $h$, then $e$ and $e'$ are called {\sl opposite} edges. Note that the edge opposite to $e$ may not be unique. It may also happen that an edge opposite to $e$ does not exist. Then there is a vertex $w$ of $P$ and two parallel hyperplanes $h$ and $h'$ such that $P\cap h=[u,v]$ and $P\cap h'=\{w\}$, and $w$ is considered a (virtual) opposite edge to $e$.

\medskip
The edge set $E$ of $P$ is split into equivalence classes $E_1,\ldots,E_n$ by the equivalence relation ``being parallel". We derive an alternate formula for $A(P,z)$. Assume $z\in \R^d$ is a non-zero vector which is in general position with respect to $E$, that is, $ze\ne 0$ for any $e \in E$. Let $u$ and $v$ be the vertices of $P$ where the function $zx$ takes its maximum and minimum on $x \in P$, respectively. Clearly $t^+-t^-=z(u-v)$.

\medskip
Let $S$ be a two-dimensional plane, parallel with $z$, and let $\pi$ denote the orthogonal projection from $\R^d$ to $S$. Choose $S$ so that $\pi(e)$ is not a single point for any $e \in E$.  Then $\pi(P)$ is a convex polygon in $S$ and there is a path $\pi(e_1),\pi(e_2),\ldots,\pi(e_n)$, say, with $e_i \in E_i$ going along the boundary of $\pi(P)$ from $\pi(v)$ to $\pi(u)$, see Figure~\ref{fig:path}.
\begin{figure}[h]
\centering
\includegraphics[scale=0.9]{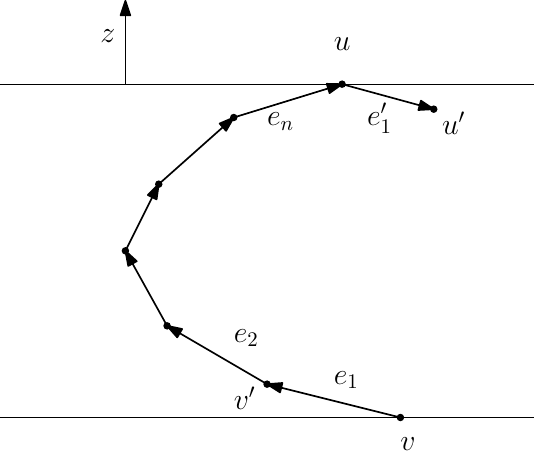}
\caption{The paths from $v$ to $u$ and from $v'$ to $u'$.}
\label{fig:path}
\end{figure}
There is at most one edge from every $E_i$ on this path, but an edge may be virtual, meaning that $e_i=0$ which is fine. The path $e_1,e_2,\ldots,e_n$ goes from $v$ to $u$ on the boundary of $P$ and $t^+(z)-t^-(z)=\sum_1^n|ze_i|$. Then
\begin{equation}\label{eq:Ave}
A(P,z)=\frac1{\sum_1^n|ze_i|}\sum_{e\in E}|ze|=\frac1{\sum_1^n|ze_i|}\sum_1^n\sum_{e\in E_i}|ze|,
\end{equation}
because the edge $e=[x,y]$ gives a vertex of $P(z,t)$ for all $t \in[zx,zy]$. Equation (\ref{eq:Ave}) is the new expression for $A(P,z).$

\medskip
\section{Proof of Lemma~\ref{l:P-P}}\label{sec:lem11}

We go back to Figure~\ref{fig:path} and the notation there. We assume that $ze>0$ for every $e \in E$ by changing the orientation of $e$ if necessary. Define $\overline {e_i}=\sum_{e \in E_i}e$. Formula (\ref{eq:Ave}) says that
\[
\la\left(\sum_1^nze_i\right)=\sum_1^n\left(\sum_{e\in E_i}ze\right)=z\sum_1^n\overline {e_i}.
\]
This holds for every $z \in \R^d$ satisfying $ze>0$ for every $e\in E$. Consequently
\[
\la \sum_1^n e_i=\sum_1^n \overline {e_i}.
\]

Assume that $e_1'$ (see Figure~\ref{fig:path}) is the edge (possibly virtual) opposite to $e_1$, and choose a vector $z^*$ such that $z^*e>0$ for every $e\in E_i$, $i>1$ and $z^*e<0$ for every $e \in E_1$. Then we have the path $e_2,e_3,\ldots,e_n,e_1'$ from $v'$ to $u'$ (see Figure~\ref{fig:path}) and by (\ref{eq:Ave})
\[
\la \left(\sum_2^nz^*e-z^*e_1'\right)=\sum_2^nz^*\overline {e_i}-z^*\overline {e_1},
\]
implying again that $\la\left(\sum_2^n e_i-e_1'\right)=\sum_2^n\overline {e_i}-\overline {e_1}.$ This immediately gives $\la(e_1+e_1')=2\overline {e_1}$, meaning that for a pair of opposite edges $e_1,e_1'$ the sum $e_1+e_1'$ equals $\frac 2{\la}\overline {e_1}$, the same vector which is exactly an edge in $P-P$. Analogously $e_i+e_i'$ equals $\frac 2{\la}\overline {e_i}$ for every pair of opposite edges $e_i,e_i'\in E_i$ (for every $E_i$). This shows that $P-P$ is indeed a zonotope generated by the vectors $\frac 2{\la}\overline {e_i}$, $i =1,\ldots,n$.\qed

\bigskip
{\bf Proof} of Fact~\ref{fact:zono}. We work in $\R^3$ for this proof. Projecting $P$ to the plane orthogonal to the direction of the vectors in $E_i$ we get a centrally symmetric polygon with $2(n-1)$ edges. Thus $|E_i|=2(n-1)$. Direct every edge $e\in E$ so that $ze>0$ where $z$ comes from Figure~\ref{fig:path}. Since $P$ is a zonotope, each edge in $E_i$ is the same vector $v_i,$ and each $v_i$ appears as one of the edges on the $e_1,\ldots,e_n$ path in Figure~\ref{fig:path}. Consequently $t^+(z)-t^-(z)=\sum_1^n zv_i$. Equation (\ref{eq:Ave}) shows that
\begin{eqnarray*}
 \nonumber  A(P,z)&=&\frac1{\sum_1^n zv_i}\sum_1^n\sum_{e\in E_i}ze\\
      &=&\frac1{\sum_1^n zv_i}\sum_1^n|E_i|zv_i=2(n-1). \qed
\end{eqnarray*}

\medskip
The same argument works in every dimension $d\ge 3$: If the zonotope $P \subset \R^d$ has $n\ge d$ generators and every $d$ of them are linearly independent, then $|E_i|=2{n-1 \choose d-2}$ and $A(P,z)=2{n-1 \choose d-2}$, indeed a constant for every $z\ne 0$. For more information on zonotopes and their relations to hyperplane arrangements see the books by Schneider~\cite{schn} and by Ziegler~\cite{zieg}.

\medskip
This proof, combined with the $z,z^*$ argument used in Lemma~\ref{l:P-P} shows as well that, $A(P,z)=\la$ is a constant for a zonotope $P \subset \R^d$ if and only if $|E_i|=\la$ for every $i.$

\bigskip
\section{ Non-zonotopes with $A(P,z)$ constant}\label{sec:exampl}

It is evident that a convex polygon $P$ is always a half-zonotope, and $A(P,z)=2$ for every $z$. For higher dimensions the key observation is the following.

\begin{lemma}\label{l:key} 	If $P$ and $Q$ are half-zonotopes in $\R^d$, then so is $P+Q.$
\end{lemma}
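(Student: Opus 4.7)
The plan is to reduce the claim to a single algebraic identity for Minkowski sums, together with the obvious fact that Minkowski sums of zonotopes are zonotopes.

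First I would unwind the definition. A set $K$ is a half-zonotope when $K-K$, understood as the Minkowski sum $K+(-K)=\{x-y:x,y\in K\}$, is a zonotope. So the task is to show that $(P+Q)-(P+Q)$ is a zonotope, given that $P-P$ and $Q-Q$ are zonotopes.

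The key step is the identity
\[
(P+Q)-(P+Q)=(P+Q)+(-P-Q)=(P-P)+(Q-Q),
\]
which follows immediately from the commutativity and associativity of Minkowski addition, together with $-(P+Q)=(-P)+(-Q)$. Thus $(P+Q)-(P+Q)$ is the Minkowski sum of the two zonotopes $P-P$ and $Q-Q$.

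Finally, I would observe that the Minkowski sum of two zonotopes is again a zonotope: if $P-P=\sum_i[0,a_i]$ and $Q-Q=\sum_j[0,b_j]$, then $(P-P)+(Q-Q)=\sum_i[0,a_i]+\sum_j[0,b_j]$, so the union of the two generating sets generates $(P+Q)-(P+Q)$. Hence $P+Q$ is a half-zonotope. There is no real obstacle here; the only thing to watch is that ``$K-K$'' denotes the Minkowski sum $K+(-K)$ rather than a set-theoretic or erosion-type difference, so that the distributivity used in the displayed identity is valid.
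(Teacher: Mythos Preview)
Your proof is correct and follows exactly the same approach as the paper: both reduce the claim to the identity $(P+Q)-(P+Q)=(P-P)+(Q-Q)$ and then use that a Minkowski sum of zonotopes is a zonotope. You are simply a bit more explicit about the latter step.
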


The {\bf proof} is simple: $(P+Q)-(P+Q)=\{p+q-p'-q':p,p'\in P, q,q'\in Q\}=(P-P)+(Q-Q).$ \qed

\medskip
It follows that a half-zonotope need not be a zonotope, for instance if $Q_1$ and $Q_2$ are two-dimensional polygons in $\R^3$ lying in non-parallel planes, then $P=Q_1+Q_2$ is a half-zonotope but is not a zonotope. A concrete example is when both $Q_1$ and $Q_2$ are triangles, see Figure~\ref{fig:non-zono}, where $Q_1$ is the black and $Q_2$ is the red triangle, and the edges of $P$ are drawn with heavy segments. There are several similar examples, for instance when $Q_1$ is a convex polygon and $Q_2$ is a parallelogram. In these cases $A(Q_1+Q_2,z)$ is not a constant as one can check directly.

\begin{figure}[h]
\centering
\includegraphics[scale=0.9]{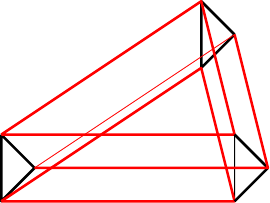}
\caption{A half-zonotope which is not a zonotope.}
\label{fig:non-zono}
\end{figure}

We note further that under the above conditions and with notation $P=Q_1+Q_2$
\[
w(P,z)=w(Q_1,z)+w(Q_2,z) \mbox{ when } z \mbox{ is a unit vector}.
\]
It is evident that $w(Q_1,z)=\frac 12\sum_1^k|zu_i|$ where $u_1,\ldots,u_k$ are the edge vectors of the polygon $Q_1$. Thus in this case the computation of $t^+(P)-t^-(P)$ is easy even if $z$ is not a unit vector:
\begin{equation}\label{eq:width}
t^+(P)-t^-(P)=\frac 12 \sum |uz|
\end{equation}
where the summation goes for all edges $u$ of $Q_1$ and of $Q_2$.

\medskip
Next we give a few examples where $P$ is not a zonotope but $A(P,z)$ is a constant.

\smallskip
{\bf Example 5.1} Let $Q_1$ and $Q_2$ be two convex quadrilaterals, lying in non-parallel planes in $\R^3$ and set $P=Q_1+Q_2$, see Figure~\ref{fig:halfzono}, where $Q_1$ is the black and $Q_2$ is the red quadrilateral. Again, the edges of $P$ are drawn with heavy segments. In two copies of $Q_1$ two edges (of $Q_1$) are not edges of $P$, they are drawn with thin segments. Same applies to $Q_2.$ One can check that each edge of $Q_1$ and $Q_2$ appears as an edge of $P$ exactly three times. With the notation of Section~\ref{sec:ave}, $|E_i|=3$ for every one of the 8 classes of the edges of $P$. The polytope $P$ is not a zonotope. Its edge vectors in each class are the same, say $v_i$ in $E_i.$ In view of equation (\ref{eq:width}), $t^+-t^-=\frac 12\sum_{i=1}^8|zv_i|$ which equals  $\frac 12\sum_{i=1}^8zv_i$ when the orientations are chosen to satisfy $zv_i>0$ for every $i.$ Then the suitably modified version of equation (\ref{eq:Ave}) applies and shows that
\[
A(P,z)=\frac{2}{\sum_1^8 zv_i}\sum_1^8|E_i|zv_i=6.
\]

\begin{figure}[h]
\centering
\includegraphics[scale=0.85]{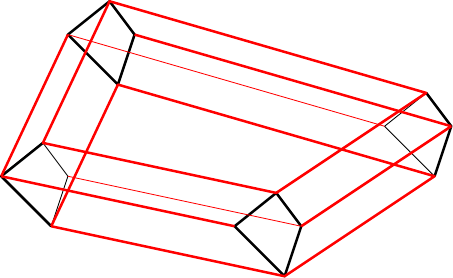}
\caption{A non-zonotope with $A(P,z)=6$.}
\label{fig:halfzono}
\end{figure}
We'd like to mention that this example is a bit of a cheating because it is almost the same as a zonotope with 4 generators. There we get 4 classes of edges, and 6 (identical as vectors) edges in each class. Here the opposite sides of $Q_i$ are not parallel, so we have 8 classes with 3 edges in each.

\medskip
{\bf Example 5.2} Essentially the same method works and gives a half-zonotope $P$ (which is not a zonotope) with $A(P,z)=2(k+1)$ for every integer $k\ge 2.$ Just take two slightly perturbed copies of the regular $2k$-gon for $Q_1$ and $Q_2$, making sure that opposite vertices remain opposite after the perturbation. Then $P=Q_1+Q_2$ is a half-zonotope because of Lemma~\ref{l:key}, and there are $2k+2k$ classes of edges, each class $E_i$ containing $k+1$ copies of the same edge, represented by the vector $v_i$. Again, $t^+-t^-=\frac 12\sum_{i=1}^{4k}zv_i$ assuming that $zv_i>0$ for every $i$. The modified version of equation (\ref{eq:Ave}) shows then that $A(P,z)=2(k+1).$ This example is also similar to the case of a zonotope with $k+k$ generators where the first $k$ generators are coplanar, and so are the next $k$ ones.

\medskip
{\bf Example 5.3} in which there is no cheating. Write $f_1,f_2,f_3$ for the standard basis vectors of $\R^3$ and let $T_1$ be the triangle with vertices $0,f_2,f_3$. Similarly the triangles $T_2$ and $T_3$ have vertices $0,f_1,f_3$ and $0,f_1,f_2$, respectively. The polytope $P=T_1+T_2+T_3$ is a half-zonotope but not a zonotope yet $A(P,z)=4$ for every $z$ as one can check directly. Figure~\ref{fig:nocheat} shows the non-zonotope $P$, its facets are 3 triangles, 3 pentagons, and one hexagon (coloured blue in the figure).
\begin{figure}[h]
\centering
\includegraphics[scale=0.7]{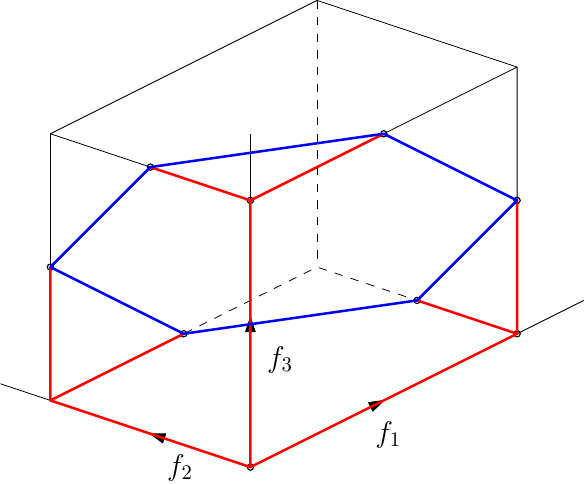}
\caption{A non-zonotope with $A(P,z)=4$.}
\label{fig:nocheat}
\end{figure}
\medskip

There are several similar examples in $\R^3$, and also in higher dimensions. For instance in $\R^4$, take two convex $k$-gons (or even two triangles)  $Q_1,Q_2$ in two general position planes. Their sum $P=Q_1+Q_2$ is not a zonotope but $A(P,z)=2k$ for every $z$.

\bigskip
\section{Proof of Theorem~\ref{th:main}}\label{sec:main}

Assume $P\subset \R^3$ is a centrally symmetric polytope with $A(P,z)$ constant, say $\la$. According to Lemma~\ref{l:P-P} $P$ is a half-zonotope. As it is centrally symmetric, Fact~\ref{f:o-sym} shows that $P$ itself is a zonotope. Assume its generator set is $V=\{v_1,\ldots,v_n\}$. The hypergraph $\F$ was defined in Section~\ref{sec:introd}. So it suffices to prove

\begin{claim}\label{cl:deg} Under the above conditions $\la=2\deg v$ for every $v \in V$.
\end{claim}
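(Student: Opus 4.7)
The plan is to reduce Claim~\ref{cl:deg} to a projection argument, leveraging the characterization noted at the end of Section~\ref{sec:lem11}: since $P$ is a zonotope with $A(P,z)=\lambda$ a constant, that remark gives $|E_i|=\lambda$ for every parallel class $E_i$ of edges. Hence it suffices to show $|E_i|=2\deg v_i$ for each generator $v_i\in V$, where $E_i$ is the class of edges of $P$ parallel to $v_i$.

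Fix $v=v_i$ and let $\pi\colon\R^3\to v^\perp$ be orthogonal projection. Writing $P=[0,v]+P'$ with $P'=\sum_{j\ne i}[0,v_j]$, the image $\pi(P)=\pi(P')$ is a centrally symmetric polygon, namely the 2D zonotope generated by $\pi(v_j)$ for $j\ne i$. The key geometric step is to establish a bijection between $E_i$ and the vertices of $\pi(P)$: for each vertex $w$ of $\pi(P)$ and any line $\ell\subset v^\perp$ supporting $\pi(P)$ only at $w$, the preimage $\pi^{-1}(\ell)$ is a supporting plane of $P$ whose intersection with $P$ is the fibre $\pi^{-1}(w)\cap P$, a segment parallel to $v$ of length at least $\|v\|>0$ because $[0,v]$ is a Minkowski summand of $P$. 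Hence this fibre is an edge of $P$ in $E_i$. Conversely, every edge in $E_i$ projects to a single point of $\pi(P)$ that lies at the intersection of the projections of its two adjacent facets and is therefore a vertex of $\pi(P)$. Consequently $|E_i|$ equals the number of vertices of $\pi(P)$.

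It then remains to count those vertices. A 2D zonotope generated by vectors in $r$ distinct parallelism classes has exactly $2r$ vertices, so I need the number of classes among $\{\pi(v_j):j\ne i\}$. The relation $\pi(v_j)\parallel\pi(v_k)$ holds iff $v_j-\alpha v_k\in\R v$ for some scalar $\alpha$, i.e.\ iff $\{v_i,v_j,v_k\}$ is linearly dependent. Therefore these classes correspond bijectively to the 2-dimensional linear subspaces through $0$ that contain $v_i$ together with at least one other generator, which are precisely the maximal coplanar subsets $W\subset V$ containing $v_i$---i.e.\ the edges of $\F$ incident to $v_i$. Thus the number of classes is $\deg v_i$, giving $|E_i|=2\deg v_i$ and hence $\lambda=2\deg v_i$.

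The main obstacle I anticipate is making the projection bijection in the second paragraph fully rigorous: one has to verify that the fibre over a vertex of $\pi(P)$ is genuinely a 1-dimensional face of $P$ (neither lower-dimensional, thanks to the Minkowski summand $[0,v]$, nor swallowed by a higher-dimensional face) and that distinct edges parallel to $v$ project to distinct vertices. Once these routine but careful verifications are in place, the combinatorial match with the hypergraph $\F$ in the third paragraph is immediate.
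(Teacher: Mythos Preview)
Your proof is correct and follows essentially the same strategy as the paper's: both establish $|E_i|=2\deg v_i$ by projecting $P$ onto $v_i^{\perp}$ and counting vertices of the resulting centrally symmetric polygon via the edges of $\F$ through $v_i$. The only minor difference is in how $\lambda=|E_i|$ is obtained: you invoke the remark at the end of Section~\ref{sec:lem11} directly, while the paper re-derives it inside the proof from the identity $\lambda(e_i+e_i')=2\overline{e_i}$ (established in Lemma~\ref{l:P-P}) together with $e_i+e_i'=2v_i$ and $\overline{e_i}=|E_i|v_i$. Your more detailed justification of the bijection between $E_i$ and the vertices of $\pi(P)$ is a useful elaboration of what the paper states in one sentence.
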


{\bf Proof.}  Projecting $P$ to the plane orthogonal to $v_i$ gives a centrally symmetric convex polygon with $2\deg v_i$ edges (and vertices) as each edge of this polygon corresponds to an edge $W \in \F$ that contains $v_i$. Thus $|E_i|=2\deg v_i$.

Orient each edge $e\in E_i$ the same way as $v_i$ is oriented. Then $\overline {e_i}=|E_i|v_i=2(\deg v_i)v_i$. If $e_i,e_i'\in E_i$ are opposite edges then $e_i+e_i'=2v_i$ because $P$ is a zonotope.
The proof of Lemma~\ref{l:P-P} shows that$\la (e_i+e_i')=2\overline {e_i}$. Consequently $2\la v_i=\la (e_i+e_i')=4(\deg v_i)v_i$. \qed

\medskip
The same argument works for the proof of Theorem~\ref{th:maind}, we omit the details.

\medskip
The claim shows that, for a zonotope $P\subset \R^3$, $A(P,z)$ is a constant if and only if $\deg v$ is the same number for every $v \in V$. In particular, the lengths of the generators do not matter, only the degrees count. So one can identify a generator $v\in V$ with the line $L(v)=\{\al v: \al \in \R\}$ and consider a plane $S$ (not containing the origin and not parallel with any $v \in V$), and further identify $v\in V$ with the point $v^*=L(v)\cap S$. This way we have a new representation of  $\F$, to be called the $S$-{\it representation}: the vertex set is $V^*=\{v^*: v\in V\}$, and the edges of $\F$ are formed by sets of collinear points of $V^*$. There are several examples of zonotopes with $A(P,z)$ a constant.

\bigskip
{\bf Example 6.1} is given in Fact~\ref{fact:zono} where $\deg v_i=n-1$. $V^*$ is just $n$ points in general position (no three on a line) in the plane $S$.

\begin{figure}[h]
\centering
\includegraphics[scale=0.8]{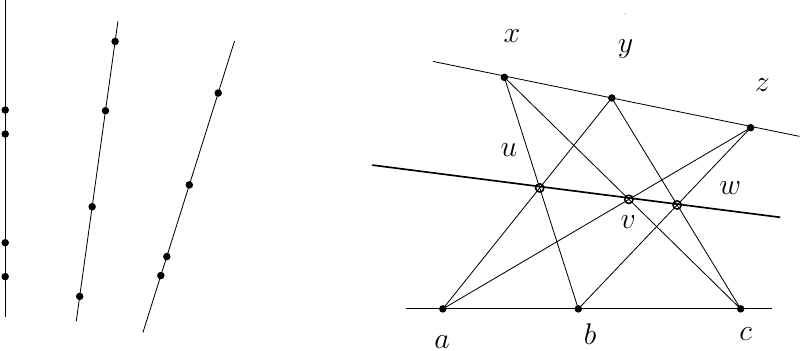}
\caption{Example 6.2 with $\ell=3$, $k=4$ and Example 6.3.}
\label{fig:3-4}
\end{figure}

\medskip
{\bf Example 6.2} is again explained in the $S$-representation of $\F$. We have $\ell$ lines in $S$ and on each line $k$ points and $V^*$ consists of these $\ell k$ points, everything else is in general position. Then $\deg v=1+(\ell-1)k$ for every $v \in V$. This gives a zonotope $P$ with $n=\ell k$ generators and $A(P,z)=2(n-k+1)$. The hypergraph $\F$ consists of $\ell$ disjoint $k$-tuples (corresponding to the $\ell$ lines in $S$) and of all pairs of points that come form distinct $k$-tuples (or lines). Figure~\ref{fig:3-4}, left shows the points of $V^*$ when $\ell=3$ and $k=4$.

\medskip
{\bf Example 6.3.} For two points $p,q\in S$ let $L(p,q)$ denote the line connecting them. The example is three points, $a,b,c$ on a line and three further points, $x,y,z$ on another line, and three more points, namely $u=L(a,y)\cap L(b,x),v=L(a,z)\cap L(c,x)$ and $w=L(b,z)\cap L(c,y)$, see Figure~\ref{fig:3-4} right. $V^*$ consists of these nine points. By the Pappos theorem $u,v,w$ are also collinear. In $\F$ there are nine collinear triples and nine pairs (namely $ax, aw, by, bv, cu, cz, xw, yv, zu$), and $\deg v=5$ for every $v \in V$.

\medskip
In the $S$-representation we have a finite set of points in the plane $S$ that define lines and also the hypergraph $\F$. It is well known that there are {\sl ordinary lines}, that is, pairs of points $p,q \in V^*$ such that $L(p,q)$ contains no further point from $V^*$. This was a question of Sylvester from 1893, solved fifty years later by Gallai (see \cite{Erd} and \cite{Erd82}) and Melchior \cite{Mel}. Around 1960 B\"or\"oczky (unpublished but see for instance \cite{CrMc} or  \cite{GreTao}) constructed examples of $n$ points in the plane with a small number of ordinary lines.
According to a famous result of Green and Tao \cite{GreTao} these examples give the minimal number of ordinary lines for $n$ points.  Interestingly, in these examples $\deg v$ is not a constant, so $A(P,z)$ is not a constant for the corresponding zonotope.

\bigskip
\section{A related average}\label{sec:tiles}

The standard tiling of $\R^3$ with unit cubes is the collection of cubes $C(a)=\{(x_1,x_2,x_3)\in \R^3: a_i\le x_i \le a_{i+1}, i=1,2,3\}$ where $a=(a_1,a_2,a_3)$ is a lattice point in $\R^3$, that is, every $a_i$ is an integer. For a 2-dimensional plane $L\subset \R^3$ the polygons $L\cap C(a)$ (for the cubes when this intersection is non-empty) define a tiling $\T(L)$ of $L$. Clearly each tile $T$ of $\T(L)$ is a convex polygon with $e(T)=3,4,5$ or $6$ edges (or vertices). What is the average number $\Ave(L)$ of $e(T)$?

As $\T(L)$ is infinite, this average is to be taken with some caution. Assuming $Q$ is a (large) cube in $\R^3$, the average of $e(T)=v(T)$ for $T\in \T(L),T \subset Q$ is given as
\[
\Ave(L,C)=\frac{\sum_{T\in \T(L),T \subset Q}v(T)}{|\{T\in \T(L),T \subset Q\}|}.
\]
To define $\Ave(L)$ take a sequence of large cubes $Q_1,Q_2,\ldots$ with their diameter tending to infinity and set $\Ave(L)=\lim \Ave(L,Q_n)$. Standard arguments show that the limit exists and is independent of the choice of the sequence $Q_1,Q_2,\ldots$.

\begin{theorem}If $L$ contains no lattice point, then $\rm{Ave}(L)=4$.
\end{theorem}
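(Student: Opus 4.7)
The plan is to combine a local incidence count around each vertex of $\T(L)$ with Euler's formula in the style of an asymptotic density argument, very much in the spirit of the proof of Lemma~\ref{l:P-P}, but now applied to the planar graph dual to the cubical tiling.

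First I would describe the combinatorics of $\T(L)$. Each tile equals $L\cap C(a)$ for some lattice point $a$; its edges are segments of $L\cap\{x_i=k\}$ for some $i\in\{1,2,3\}$ and $k\in\Z$, and its vertices are points of $L$ lying on an edge of some cube $C(a)$, i.e.\ points at which at least two of the coordinates are integers. The key observation is the following local count: \emph{every vertex of $\T(L)$ is shared by exactly four tiles, and every edge by exactly two}. The edge statement is immediate, since each face of a cube belongs to two adjacent cubes. For the vertex statement, let $p\in L$ be a vertex of $\T(L)$, so two of its coordinates, say $p_i$ and $p_j$, are integers. Since $L$ contains no lattice point, the third coordinate $p_\ell$ is not an integer; hence $p$ lies in the relative interior of an edge $\{x_i=p_i,\,x_j=p_j,\,\lfloor p_\ell\rfloor\le x_\ell\le \lceil p_\ell\rceil\}$ of the cubical grid, and this edge is shared by exactly four cubes. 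Each of those four cubes contributes a tile having $p$ as a vertex.

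Now I would apply Euler's formula in a growing window. Fix a large cube $Q_n$ and let $V_n,E_n,T_n$ denote the numbers of vertices, edges and tiles of $\T(L)$ lying inside $L\cap Q_n$. Counting vertex–tile incidences gives $\sum_{T\subset Q_n} v(T)=4V_n+O(\diam Q_n)$, where the error absorbs the tiles and vertices meeting the boundary of $L\cap Q_n$. Counting edge–tile incidences gives $\sum_{T\subset Q_n} v(T)=2E_n+O(\diam Q_n)$. Euler's formula for the planar graph inside the simply connected region $L\cap Q_n$ yields $V_n-E_n+T_n=O(\diam Q_n)$. Since $V_n,E_n,T_n$ are of order $(\diam Q_n)^2$, dividing through and passing to the limit gives $4v=2e$ and $v-e+f=0$ for the asymptotic densities $v,e,f$ of vertices, edges and tiles per unit area on $L$. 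Hence $e=2v$ and $f=v$, so
\[
\Ave(L)=\lim_{n\to\infty}\frac{\sum_{T\subset Q_n}v(T)}{T_n}=\frac{4v}{f}=4.
\]

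The only point requiring a little care is the possibility that $L$ is parallel to a coordinate plane, in which case one or two of the three families $L\cap\{x_i=k\}$ degenerate: but the no-lattice-point hypothesis forces the corresponding tiling to be a grid of parallelograms (or a single strip), each tile having exactly $4$ sides, so the conclusion is trivial in those cases. The main obstacle is really just the careful bookkeeping of the boundary error in the Euler/incidence counts, which is standard once the vertex-degree-$4$ property is in hand; that property is where the hypothesis that $L$ avoids the lattice is used in an essential way.
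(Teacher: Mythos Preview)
Your argument is correct and rests on the same key observation as the paper's: because $L$ avoids lattice points, each vertex of $\T(L)$ lies in the relative interior of a grid edge and is therefore incident to exactly four tiles (equivalently, has degree four in the planar graph). From there the two write-ups diverge only in bookkeeping. The paper projects $\T(L)$ to a coordinate plane, views it as the arrangement of three pencils of parallel lines, counts the vertices in a window of size $m$ directly as $cm^2+O(m)$, and then matches the tile count to the vertex count by a sweep argument (assigning to each tile its minimal vertex under a generic linear functional). You instead stay on $L$ and extract $T=V$ from Euler's formula together with $E=2V$; the sweep argument is of course one of the standard proofs of Euler's formula, so the two routes are essentially equivalent. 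Your version is slightly slicker; the paper's line-arrangement picture makes the $O(m)$ boundary control more explicit. One cosmetic remark: the analogy you draw with Lemma~\ref{l:P-P} is not apt---that lemma concerns edge sums in polytopes with constant $A(P,z)$ and has no real connection to the incidence/Euler count here.
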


{\bf Proof,} only a sketch. We identify the plane $x_3=0$ with $\R^2$ and assume that $L$ is not orthogonal to this plane. The projection of a tile $T \in \T(L)$ to $\R^2$ is denoted by $T^*$. Then $\T^*:=\{T^*:T \in \T(L)\}$ is a tiling of $\R^2$, and the tiles in $\T^*$ are determined by the lines $\ell_{a_1}:=\{x_1=a_1\}, \ell_{a_2}:=\{x_2=a_2\}$ and $\ell_{a_3}$ which is the projection to $\R^2$ of the line $L\cap\{x_3=a_3\}$, of course $a_1,a_2,a_3$ are integers. The vertices of $\T^*$ are formed by the intersections $\ell_{a_i}\cap \ell_{a_j}$, for distinct $i,j \in \{1,2,3\}.$

\smallskip
Let $Q$ be one of the cubes in the sequence $Q_1,Q_2,\ldots$ with $\diam Q=m$, $m$ large. It is not hard to check (we omit the details) that the number of vertices of $\T^*$ lying in $Q$ is $cm^2+O(m)$ where $c>1$ is a constant explicitly computable from the parameters of $L$.

\smallskip
This implies that the number $n(T^*)$ of tiles in $\T^*$ contained in $Q$ is also $cm^2+O(m)$. This is done by well-known argument: let $b\in \R^2$ be a vector such that  the linear functional $bx$ takes distinct values on the vertices of $\T^*$, and associate with each tile $T^*\in \T^*$ its vertex where the functional $bx$ takes its minimal value on $T^*$. These vertices are all distinct, so  $n(T^*)\le cm^2+O(m)$. The opposite inequality follows from the fact that there are only $O(m)$ tiles that intersect $Q$ and are not contained in $Q$.

\smallskip
We are almost finished. Each vertex in $\T^*$ is adjacent to four edges in the tiling (because $L$ contains no lattice point). The edges are counted twice by their two endpoints, apart from a few, at most $O(m)$, boundary edges, and each edge appears in two tiles (again apart from a few boundary edges). Thus the total number of edges is $4n(\T^*)+ O(m)$ and the ratio defining $\Ave(L,C)$ tends to 4 as $m \to \infty.$

\medskip
The condition ``$L$ contains no lattice point" is important, because for instance $\Ave (L)=3$ for the plane $L$ defined by the equation $x_1+x_2-x_3=0$: all tiles in $\T(L)$ are triangles. We mention further that the above argument extends to higher dimensions.

\bigskip
\section{Motivation from geology}\label{sec:Mot}
 \subsection{Primary fracture and the cube}
The geometry of fractured rock is in the forefront of interest in geology \cite{adler_fracture_network_book}. A recent study \cite{Plato} showed that
a large portion of so-called \emph{primary crack patterns} can be very well approximated by \emph{hyperplane mosaics} which are space-filling, convex tessellations generated by
hyperplanes in random positions \cite{grunbaumshepard, schneider2008stochastic, senechal}. There is one striking feature of  $d$-dimensional hyperplane mosaics which does not depend on the specific distribution (to which we will refer as the \emph{primary distribution}) generating the random positions of hyperplanes: the average values of combinatorial features
(e.g. numbers of faces, edges and vertices \cite{domokos2019honeycomb}) of the convex polyhedra
(to which we will refer to  as \emph{primary fragments)} agree with the respective values corresponding to the $d$-dimensional cube \cite{schneider2008stochastic}.

Rock fragments are the result of a multi-level fracture process \cite{adler_fracture_network_book, bohn2005hierarchical, disc}: primary (global) fracture is followed by secondary (local) fracture.
In secondary fracture, individual primary fragments are locally bisected by planes picked from a \emph{secondary random distribution} and \emph{secondary fragments} are created in this process.
Secondary fracture can also be viewed as a recursion which we introduce below.

\subsection{Secondary fracture interpreted as a recursion}

\subsubsection{The general case}

Let $P_{0,1}$ be a $d$-dimensional convex polytope with $V_{0,1}$ vertices and let us consider a cut by
a  hyperplane  $H_{0,1}$ which intersects $P_{0,1}$ in a generic manner to create one
$(d-1)$-dimensional convex polytope  $P^{\star}_{0,1}$ with $k^{\star}$ vertices.
$P^{\star}_{0,1}$ separates $P_{0,1}$ into two convex, $d$-dimensional \emph{descendant} polytopes
$P_{1,1}, P_{1,2}$ with respective vertex numbers $V_{1,1},V_{1,2}$. We introduce the notation $P_0=\{P_{0,1}\}, \quad P_1 = \{P_{1,1},P_{1,2}\}$
for the set of polytopes in steps $0$ and $1$, respectively and the notation $H_0=\{H_{0,1}\}$ for the
set of hyperplanes in step 0.  The average number of vertices of  $P_{1,1},P_{1,2}$ we denote by
$V_1=(V_{1,1}+V_{1,2})/2$. Now we can write
\begin{equation}\label{eq:rec01}
\begin{array}{rcl}
P_1 & = & f(P_0, H_0) \\
V_1 & = & g(P_1),
\end{array}
\end{equation}
where the function $f$, operating sets of polytopes and corresponding hyperplanes, is the binary cut described above,
and the function $g$, operating on sets of polytopes, is counting the average number of vertices in the given set.

Our aim is to  generalize (\ref{eq:rec01}) to a recursion formula.  However, before doing so, we introduce some related concepts.
\begin{defn}\label{def:ccuts}
We call the hyperplane cut $H_{0,1}$  \emph{critical},(\emph{supercritical, subcritical}) if $V_1=V_0$ ($V_1>V_0, V_1<V_0$).
\end{defn}
We can write the following simple
\begin{lemma}
The hyperplane cut $H_{0,1}$ is critical (supercritical, subcritical) if and only if $k^{\star}=\frac{V_0}{2}$ ($k^{\star}>\frac{V_0}{2}, k^{\star}<\frac{V_0}{2}$).
\end{lemma}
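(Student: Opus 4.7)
The plan is to count vertices of $P_{1,1}$ and $P_{1,2}$ according to their origin, distinguishing between inherited vertices and newly created ones, and then to express $V_1$ as an affine function of $k^{\star}$.

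First I would invoke the genericity assumption on $H_{0,1}$: since the hyperplane cuts $P_{0,1}$ in a generic manner, no vertex of $P_{0,1}$ lies on $H_{0,1}$, so every vertex of $P_{0,1}$ belongs to exactly one of the two open half-spaces bounded by $H_{0,1}$. Consequently each of the $V_0$ vertices of $P_{0,1}$ is inherited by precisely one of the two descendants $P_{1,1}$ or $P_{1,2}$, and these inherited vertices contribute exactly $V_0$ to the sum $V_{1,1}+V_{1,2}$.

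Next I would classify the remaining vertices of $P_{1,1}$ and $P_{1,2}$, namely those that lie on $H_{0,1}$. A point on $H_{0,1}\cap P_{0,1}=P^{\star}_{0,1}$ is a vertex of $P_{1,1}$ (or of $P_{1,2}$) if and only if it is a vertex of the $(d-1)$-dimensional polytope $P^{\star}_{0,1}$, because the supporting hyperplanes of $P_{1,i}$ at such a point are either $H_{0,1}$ itself or supporting hyperplanes of $P_{0,1}$ restricted to the half-space, and genericity ensures the vertex structure on $H_{0,1}$ is exactly that of $P^{\star}_{0,1}$. Each of the $k^{\star}$ vertices of $P^{\star}_{0,1}$ therefore appears as a vertex of \emph{both} $P_{1,1}$ and $P_{1,2}$, contributing $2k^{\star}$ to $V_{1,1}+V_{1,2}$.

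Combining the two counts gives
\begin{equation*}
V_{1,1}+V_{1,2}=V_0+2k^{\star},
\end{equation*}
so by definition
\begin{equation*}
V_1=\frac{V_{1,1}+V_{1,2}}{2}=\frac{V_0}{2}+k^{\star}.
\end{equation*}
The three equivalences $V_1=V_0 \iff k^{\star}=V_0/2$, $V_1>V_0 \iff k^{\star}>V_0/2$, and $V_1<V_0 \iff k^{\star}<V_0/2$ then follow immediately from this identity. The only delicate point in the argument is the verification that the vertex set of $P_{1,i}$ on $H_{0,1}$ coincides with the vertex set of $P^{\star}_{0,1}$; this is where the genericity hypothesis does the real work, but the claim is standard and poses no serious obstacle.
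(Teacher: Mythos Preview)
Your proof is correct and follows essentially the same approach as the paper: both arguments partition the vertices of $P_{1,1}$ and $P_{1,2}$ into those inherited from $P_{0,1}$ (which together account for $V_0$) and the $k^{\star}$ new vertices of $P^{\star}_{0,1}$ appearing in each descendant, yielding $V_1=\tfrac{V_0}{2}+k^{\star}$. The paper's version is terser, simply writing $V_{1,i}=V_{0,i}+k^{\star}$ without your explicit justification that the vertices of $P_{1,i}$ on $H_{0,1}$ coincide with those of $P^{\star}_{0,1}$.
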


\begin{proof}
Let us denote the number of vertices of $P_{0,1}$ which are also vertices of $P_{1,i}$
by $V_{0,i}$. Then we can write:

\begin{equation}\label{cut1}
\begin{array}{rcl}
V_{0,1} + V_{0,2}  & = & V_0 \\
V_{1,1}  & = & V_{0,1} + k^{\star} \\
V_{1,2}  & = & V_{0,2} + k^{\star} = V_0-V_{0,1}+k^{\star}.\\
\end{array}
\end{equation}

Based on the above, for the average vertex number $V_1=(V_{1,1}+V_{1,2})/2$ of the descendants we have:

\begin{equation}\label{generalcut}
    V_1 = \frac{V_0}{2}+k^{\star}.
\end{equation}
Formula (\ref{generalcut}) proves the statement of the Lemma.
\end{proof}

In the next step we can generalize equation \ref{eq:rec01} to
\begin{equation}\label{eq:rec02}
\begin{array}{rcl}
P_{i} & = & f(P_{i-1}, H_{i-1}) \\
V_{i} & = & g(P_{i}),
\end{array}
\end{equation}
where $P_i$ is the set of descendant polytopes after $i$ steps and $P_i$ has $2^i$ elements. Similarly,
$H_i$ is the set of $2^i$ hyperplanes bisecting the corresponding elements of the set $P_i$.
We develop equation (\ref{eq:rec02}) into a recursion formula for the sequence $P_i$ (which also defines the sequence $V_i$) using a stochastic model, which is another way of defining $A(P,z)$.

\smallskip
Let $z$ be a unit vector. Recall the definition of $t^{\pm}=t^{\pm}(P,z)$. Consider the hyperplane $\{x \in \R^d: zx=t\}$ as a random hyperplane $H_{0,1}$ where $t$ is a random and uniform element of the interval $[t^-,t^+]$. Then $V_1$ is a random variable. We will denote its
expected value by $ V_1(z)$. Based on equations (\ref{eq:ave}) and (\ref{generalcut}), we have
\begin{equation}\label{randomcut1}
V_1(z)=\frac{V_0}2+A(P_{0,1},z).
\end{equation}

Next we let $z$ be selected uniformly randomly on the sphere.
\begin{defn}\label{cpolytope}
We will denote the expected value of $V_1(z)$ by $\bar V_1$ and we will call a polytope \emph{critical} (supercritical, subcritical) if $\bar V _1=V_0$ ($\bar V _1>V_0$, $\bar V _1<V_0$).
\end{defn}

Using this definition and formula (\ref{eq:rec02}), now we can write
\begin{equation}\label{eq:rec03}
\begin{array}{rcl}
P_{i} & = & \bar f(P_{i-1}) \\
V_{i} & = & g(P_{i}),
\end{array}
\end{equation}
where the function $\bar f$, operating on sets of polytopes, is the random binary cut described in Definition \ref{cpolytope}.
We can see that equation (\ref{eq:rec03}) defines the sequence $V_i$ as a projection of a direct recursion (defining the sequence $P_i$
of polytopes, with set $P_i$ containing $2^i$ polytopes). Now we may
ask about the convergence properties of $V_i$. More precisely, we call a value $V_i=V^{\star}$ \emph{weakly critical}
if $V_{i+1}=V_i$ and we are interested whether such weakly critical values may exist because
such weakly critical values could become dominant in experimental data.
 In general, this question is very difficult as it depends on the
average vertex number of a collection of $2^i$ descendant polytopes.

\smallskip
We call the set $P_i$ \emph{uniform} if all $2^i$ polytopes in the set $P_i$
are identical. (This is, in essence equivalent of executing the first step on a single polytope $2^i$ times
and ask for the time average.) We are interested in the existence of critical polyhedra because
if such shapes exist then, in the uniform case, the sequence $V_i$ will be stationary, at least for one step.

\smallskip
If $A(P_{0,1},z)=\lambda$ (i.e. it does not depend on $z$) then, based on equation (\ref{randomcut1}),  we have
\begin{equation}
\bar V_1=\frac{V_0}2+\lambda
\end{equation}
and we can see that the initial polyhedron $P_{0,1}$ will be critical if

\begin{equation}\label{critical1}
\lambda=V_0/2.
\end{equation}

\subsubsection{The 2D case}

In $d=2$ dimensions, the polytope  $P^{\star}_{0,1}$ is always a finite line segment so we have $k^{\star}=2$ and
(\ref{generalcut}) translates into
\begin{equation}\label{2Dcut}
    V_1 = \frac{V_0}{2}+2,
\end{equation}
so we can see that only quadrangles can be critical polygons and it is easy to show that
parallelograms are indeed critical.

\subsubsection{The 3D case}
In $d=3$ dimensions, the polytope $P^{\star}_{0,1}$ is a 2D polygon and can have any number of vertices. Despite this apparent broad ambiguity, for a class of secondary distributions computer experiments showed \cite{Plato} that starting with a cube, the average $\bar V=8$ vertices  remained a
good approximation of the computed averages of secondary fragments. This computational observation showed a very good match with field and laboratory measurements of fragments which were the result of successive steps of primary and secondary fragmentation.

\smallskip
Since we know very little about the recursion (\ref{eq:rec03}), the full mathematical explanation of these computational result is lacking. However,  in this paper we showed that in 3D parallelepipeds are critical polytopes. We can also prove that they are the only critical polytopes that are centrally symmetric with $A(P,z)=\la$, a constant. Perhaps they are the only critical polytopes.
Our results suggest that the $\bar V=8$ average observed in the computer experiment may indeed play a central role in secondary fragmentation, for a broad range of secondary distributions.

\bigskip
{\bf Acknowledgements.} The support of the HUN-REN Research Network is appreciated.  The first author was partially supported from NKFIH grants No 131529, 132696, and 133919.  Support for the second author from NKFIH grant 134199 and of grant BME FIKP-V\'AZ by EMMI is kindly acknowledged.

\vskip0.5cm

\noindent
Imre B\'ar\'any \\
Alfr\'ed R\'enyi Institute of Mathematics, HUN-REN\\
13 Re\'altanoda Street, Budapest 1053 Hungary\\
{\tt barany.imre@renyi.mta.hu}\\
and \\
Department of Mathematics\\
University College London\\
Gower Street, London, WC1E 6BT, UK\\

\noindent
G\'abor Domokos\\
Department of Morphology and Geometric Modeling, and \\
HUN-REN-BME Morphodynamics Research Group\\
Budapest University of Technology and Economics\\
M\H uegyetem rkp 3, Budapest, 1111 Hungary\\
{\tt domokos@iit.bme.hu}

\end{document}